\theoremstyle{plain}
\newtheorem{theorem}{Theorem}[section]
\newtheorem{question}[theorem]{Question}
\theoremstyle{remark}
\newtheorem*{claim}{Claim}
\begin{document}
\title [Topologically minimal surfaces]
{On topologically minimal surfaces of high genus}

\author[J. H. Lee]{Jung Hoon Lee}
\address{Department of Mathematics and Institute of Pure and Applied Mathematics,
Chonbuk National University, Jeonju 561-756, Korea}
\email{junghoon@jbnu.ac.kr}

\maketitle
\begin{abstract}
We show that a $3$-manifold containing an incompressible surface has
topologically minimal surfaces of arbitrary high genus.
\end{abstract}

\section{Introduction}

Let $S$ be a closed connected orientable surface in a $3$-manifold $M$.
Define the {\em disk complex} $\mathcal{D}_S$ as follows.
\begin{itemize}
\item Vertices of $\mathcal{D}_S$ are isotopy classes of compressing disks for $S$.
\item A collection of $k+1$ distinct vertices constitute a $k$-simplex if
      there are pairwise disjoint representatives.
\end{itemize}

A surface $S$ is {\em topologically minimal}
if $\mathcal{D}_S$ is empty or $\pi_i(\mathcal{D}_S)$ is non-trivial for some $i$.
A {\em topological index} of $S$ is $0$ if $\mathcal{D}_S$ is empty,
and the smallest $n$ such that $\pi_{n-1}(\mathcal{D}_S)$ is non-trivial, otherwise.

Topologically minimal surfaces are defined by Bachman
as topological analogues of geometrically minimal surfaces \cite{Bachman3}.
They generalize well-known classes of surfaces such as incompressible surfaces,
strongly irreducible surfaces and critical surfaces \cite{Bachman1}, \cite{Bachman2},
and share nice properties.
For example, if a $3$-manifold contains a topologically minimal surface and
an incompressible surface, then the two surfaces can be isotoped so that
any intersection loop is essential on both surfaces.

In this paper, we show that any $3$-manifold containing an incompressible surface
has topologically minimal surfaces of arbitrary high genus.
This gives a negative answer to the conjecture \cite[Conjecture 5.7]{Bachman3}
that the only connected topologically minimal surfaces in $\textrm{(surface)} \times I$
are a single copy of the surface and two copies of the surface connected by an unknotted tube.

\section{Retraction of disk complex onto $S^{n-1}$}

In this section we consider a sufficient condition for a surface to be topologically minimal.
The idea is due to \cite[Claim 5]{Bachman-Johnson}.

Let $S$ be a surface in a $3$-manifold and
$D_0$ and $E_0$ be compressing disks for $S$ that intersect.
Then $D_0\cup E_0$ spans a subcomplex $S^0$ of $\mathcal{D}_S$.
Suppose that $D_1$ and $E_1$ are compressing disks for $S$ disjoint from $D_0\cup E_0$
and $D_1$ intersects $E_1$.
Then $S^0\cup (D_1\cup E_1)$ spans a subcomplex $S^1$ of $\mathcal{D}_S$.
The subcomplex $S^1$ can be thought as a suspension of $S^0$ over $D_1\cup E_1$.
If $D_2$ and $E_2$ are compressing disks for $S$ disjoint from $S^1$ and
$D_2$ intersects $E_2$,
then $S^1\cup (D_2\cup E_2)$ spans a $2$-sphere $S^2$
which is a suspension of $S^1$ over $D_2\cup E_2$. See Figure \ref{fig1}.
In this way, we can consider a sequence of embedded spheres
$S^0\subset S^1\subset\cdots\subset S^{n-1}$ in $\mathcal{D}_S$
obtained by iterated suspensions.

\begin{figure}[ht!]
\begin{center}
\includegraphics[width=10cm]{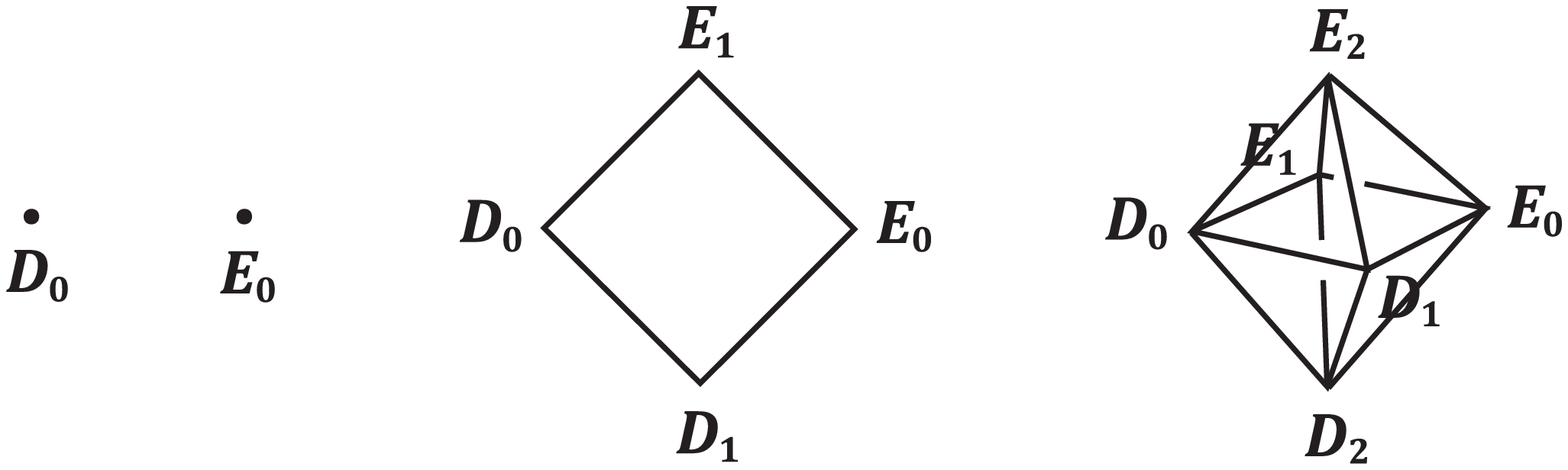}
\caption{}\label{fig1}
\end{center}
\end{figure}

Suppose that there exists a retraction $r:\mathcal{D}_S\rightarrow S^{n-1}$
of $\mathcal{D}_S$ onto a subcomplex $S^{n-1}$.
Let $i:S^{n-1}\rightarrow \mathcal{D}_S$ be the inclusion.
Then the induced maps on $\pi_{n-1}$ satisfy $r_*\circ i_*=\mathrm{id}_*(\pi_{n-1}(S^{n-1}))$.
Therefore $r_*:\pi_{n-1}(\mathcal{D}_S)\rightarrow \pi_{n-1}(S^{n-1})$
is a non-zero map and $\pi_{n-1}(\mathcal{D}_S)$ is non-trivial.
Hence $S$ is a topologically minimal surface of index at most $n$.

\section{Construction of topologically minimal surfaces}

Let $F$ be a genus $g$ incompressible surface in a $3$--manifold $M$.
Then $F$ is an index $0$ topologically minimal surface.

Take two parallel copies of $F$ and connect them by an unknotted tube, and
let $F_1$ be the resulting surface.
Let $V$ denote one side of $F_1$ which is homeomorphic to $\textrm{(punctured surface)} \times I$
and $W$ denote the other side of $F_1$.
Then there is a unique compressing disk $E$ for $F_1$ in $W$ and
any compressing disk in $V$ intersects $E$.
Hence $F_1$ is a strongly irreducible surface,
and equivalently a surface of topological index $1$.
So we can define a retraction $r:\mathcal{D}_{F_1}\rightarrow S^0$,
which sends all compressing disks in $V$ to some fixed compressing disk $D$ in $V$
and sends $E$ to $E$.
See Figure \ref{fig2}.

\begin{figure}[ht!]
\begin{center}
\includegraphics[width=6cm]{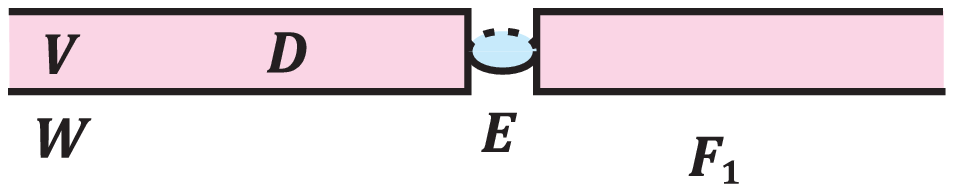}
\caption{}\label{fig2}
\end{center}
\end{figure}

We will inductively construct a genus $(n+2)g$ topologically minimal surface from
a genus $(n+1)g$ topologically minimal surface as follows.
Let $F_n$ be a connected surface obtained from $n+1$ parallel copies of $F$ by
connecting each pair of two consecutive parallel copies along an unknotted tube.
We designate one side of $F_n$ as $V$ and the other side of $F_n$ as $W$.
Even if $F_n$ is non-separating in $M$, we say that a compressing disk is in $V$ or $W$
if a small collar neighborhood of the boundary of the disk is in $V$ or $W$ respectively.
Suppose that there exists a retraction $r:\mathcal{D}_{F_n}\rightarrow S^{n-1}$,
where $S^{n-1}$ is a subcomplex of $\mathcal{D}_{F_n}$
obtained from $S^0$ by iterated suspensions as in Section $2$.
Let $F_{n+1}$ be a connected surface obtained from $F_n$ by
adding one more copy of $F$ and tubing along an unknotted tube.
Note that every compressing disk for $F_n$, in particular every compressing disk of $S^{n-1}$,
can be regarded as a compressing disk for $F_{n+1}$ also.
Let $V'$ and $W'$ denote the same side as $V$ and $W$ respectively.
Let $E$ be the unique compressing disk for the lastly attached tube in $W'$.
Let $D$ be a fixed compressing disk in $V'$ that intersects $E$ and
disjoint from every compressing disk of $S^{n-1}$,
e.g. a compressing disk that lies in $\textrm{(punctured surface)} \times I$ 
intersecting $E$ in two points.
See Figure \ref{fig3}.

\begin{figure}[ht!]
\begin{center}
\includegraphics[width=12.5cm]{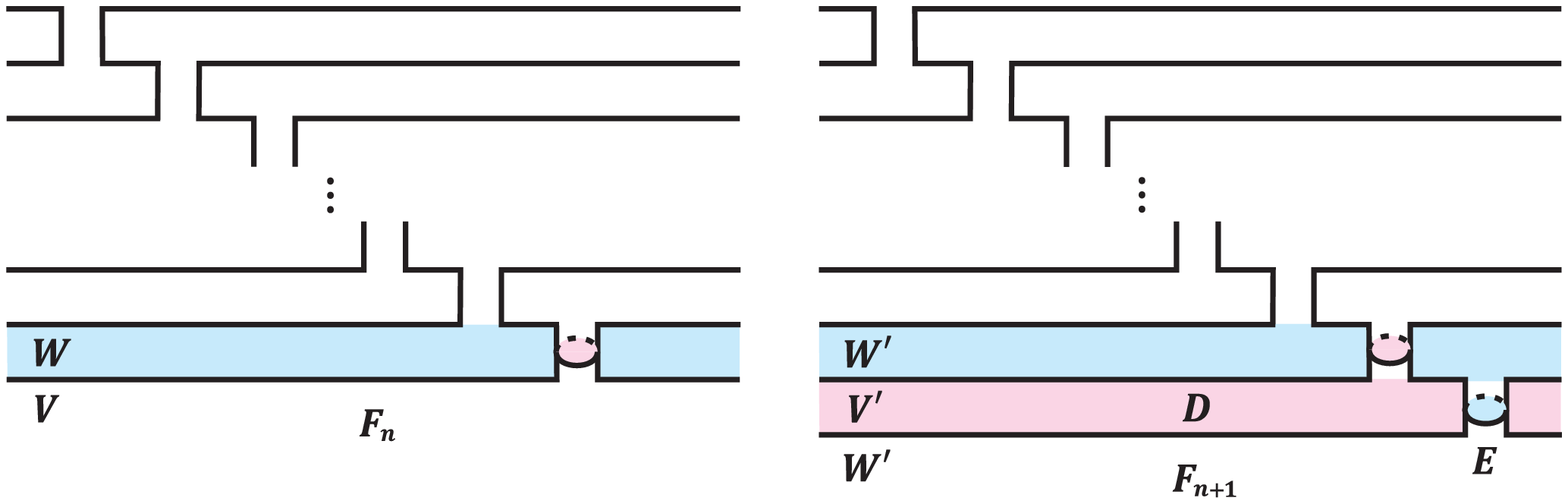}
\caption{}\label{fig3}
\end{center}
\end{figure}

Compressing disks in $\mathcal{D}_{F_{n+1}}$ can be partitioned into the following four types.

\begin{enumerate}
\item $E$
\item compressing disks in $W'$ other than $E$
\item compressing disks in $V'$ that intersect $E$
\item compressing disks in $V'$ that are disjoint from $E$.
\end{enumerate}

Let $E_0$ be a compressing disk that belongs to type $(2)$,
and assume that $E_0$ intersects $E$ transversely and minimally.
We may assume that there is no circle component of intersection in $E_0\cap E$.
If $E_0\cap E$ is nonempty, let $\gamma$ be an outermost arc of $E_0\cap E$ in $E_0$
and $\Delta_0$ be the corresponding outermost disk of $E_0$ cut by $\gamma$.
Let $\Delta'$ and $\Delta''$ be the two disks of $E$ cut by $\gamma$.
By the minimality of $|E_0\cap E|$, both $\Delta_0\cup\Delta'$ and $\Delta_0\cup\Delta''$
are essential in $W'$.
After an isotopy, both $\Delta_0\cup\Delta'$ and $\Delta_0\cup\Delta''$
can be regarded as compressing disks for $F_n$ in $W$.
The two disks $\Delta_0\cup\Delta'$ and $\Delta_0\cup\Delta''$ are not isotopic in $W'$,
however they represent the same isotopy class in $W$.
Also in the case that $E_0\cap E$ is empty, $E_0$ can be
regarded as a compressing disk for $F_n$ in $W$.

Let $D_0$ be a disk that belongs to type $(4)$.
Then $D_0$ can be regarded as a compressing disk for $F_n$ in $V$.

Now we define a map $r':\mathcal{D}_{F_{n+1}}\rightarrow S^n$,
which extends $r:\mathcal{D}_{F_n}\rightarrow S^{n-1}$.
The disks $D$ and $E$ are disjoint from the compressing disks of $S^{n-1}$.
So $S^{n-1}\cup\{D, E\}$ spans a subcomplex $S^n$ of $\mathcal{D}_{F_{n+1}}$.
There is a sequence of embedded spheres
$S^0\subset S^1\subset\cdots\subset S^{n-1}\subset S^{n}$ in $S^n$.

Define $r'(E)$ to be $E$.
We observed that a disk $E_0$ belonging to type $(2)$ has an outermost disk (or $E_0$ itself),
which gives rise to a compressing disk $\Delta$ for $F_n$ in $W$.
Among all such disks $\Delta$'s of $E_0$, consider the smallest $i$ such that
$S^i$ contains an image by $r$ of some $\Delta$.
Suppose that there are two such disks $\Delta_1$ and $\Delta_2$ of $E_0$,
i.e. both $r(\Delta_1)$ and $r(\Delta_2)$ are contained in $S^i-S^{i-1}$.
There are only two vertices in $S^i-S^{i-1}$.
Since the two vertices of $S^i-S^{i-1}$ represent disks in opposite sides of $F_n$,
and $\Delta_1$ and $\Delta_2$ are in the same side of $F_n$,
we can see that $r(\Delta_1)=r(\Delta_2)$.
(In fact, we are assuming that inductively $r$ is defined in this manner.)
So if we define $r'(E_0)$ to be $r(\Delta_1)$, it is well-defined.

For a disk belonging to type $(3)$, we define its image by $r'$ to be $D$.
A disk $D_0$ belonging to type $(4)$ can be regarded as a compressing disk for $F_n$ in $V$.
We define $r'(D_0)$ to be $r(D_0)$.
So far, $r'$ is defined on vertices of $\mathcal{D}_{F_{n+1}}$.

\begin{claim}
The map $r'$ sends endpoints of an edge to endpoints of an edge or a single vertex.
\end{claim}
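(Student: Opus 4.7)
The plan is to classify the edge $\{D_1,D_2\}$ in $\mathcal{D}_{F_{n+1}}$ according to the four types of compressing disks for $F_{n+1}$. First I would rule out the pairings forbidden by disjointness: $(1)$--$(3)$ is impossible since $E$ meets every type $(3)$ disk, and $(1)$--$(1)$ is vacuous since $E$ is a single vertex. This leaves eight pairings, and my aim is to check each one mechanically.

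The structural observation that does most of the work is that $r'$ is side-preserving: a disk on the $W'$ side of $F_{n+1}$ is sent either to $E$ or (via the outermost disk $\Delta_0\cup\Delta'$, which sits in $W$ for $F_n$) to a $W$-side vertex of $S^{n-1}$, and symmetrically for $V'$-side disks. Inductively $r$ itself enjoys this same property, so the upgraded inductive statement I would carry is: $r$ is simplicial \emph{and} preserves sides. The payoff is that in the iteratively suspended sphere $S^n$, the two vertices added at each level lie on opposite sides of the underlying surface, and any two vertices at distinct levels are joined by an edge; hence any two same-side vertices of $S^n$ are automatically joined by an edge or coincide.

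With this in hand most of the case analysis collapses. The pairings involving the top-level suspension vertices $E$ or $D$—namely $(1)$--$(2)$, $(1)$--$(4)$, $(2)$--$(3)$, $(3)$--$(3)$, $(3)$--$(4)$—are immediate: the other endpoint sits in $S^{n-1}\cup\{D,E\}$ at a strictly lower level on one side or the other, and adjacency in the suspension is built in. The same-side pairings $(2)$--$(2)$ and $(4)$--$(4)$ send both endpoints to same-side vertices of $S^{n-1}$, which by the observation above are pairwise adjacent or equal; notably no disjointness argument on the representatives $\Delta_*$ is required here.

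The one pairing that genuinely needs the inductive simpliciality of $r$ is $(2)$--$(4)$, since its images lie on opposite sides of $F_n$ and could \emph{a priori} be the two non-adjacent suspension points at a common level. The key observation is that the outermost representative $\Delta=\Delta_0\cup\Delta'$ used to define $r'(E_0)$ is automatically disjoint from $D_0$: $\Delta_0\subset E_0$ is disjoint from $D_0$ because $E_0\cap D_0=\emptyset$, and $\Delta'\subset E$ is disjoint from $D_0$ by the very definition of type $(4)$. Thus for any minimizing $\Delta_*$ the pair $\{\Delta_*,D_0\}$ is an edge of $\mathcal{D}_{F_n}$, and the inductive simpliciality of $r$ sends it to an edge or a vertex of $S^{n-1}\subset S^n$. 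The only delicate point is bookkeeping: making sure the joint inductive statement ``simpliciality plus side preservation'' propagates cleanly from step $n$ to step $n+1$, and that the minimization in the definition of $r'$ on type $(2)$ disks is compatible with the disjointness arguments above. I expect this bookkeeping, rather than any single geometric step, to be the main obstacle.
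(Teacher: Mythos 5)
Your proposal is correct, and its skeleton matches the paper's: a case analysis over the four types, with the inductive simpliciality of $r$ carrying the one genuinely geometric case, type $(2)$ against type $(4)$, where you argue exactly as the paper does that the outermost-derived disk $\Delta_0\cup\Delta'$ is automatically disjoint from the type $(4)$ disk, so $r$ sends the pair to an edge or a vertex of $S^{n-1}$. Where you differ is in the remaining cases: the paper treats two disjoint type $(2)$ disks by choosing disjoint outermost representatives (its Figure 4) and two type $(4)$ disks by simpliciality of $r$, whereas you dispatch all same-side and top-level pairings at once with the observation that $r'$ preserves sides and that in the iterated suspension the two vertices at each level lie on opposite sides, so any two same-side vertices sit at distinct levels and are joined by an edge (or coincide). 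This is a modest but genuine gain in economy: it promotes to an explicit inductive hypothesis the side-preservation that the paper only records parenthetically (``we are assuming that inductively $r$ is defined in this manner''), and it sidesteps a small wrinkle in the paper's Case 5, namely that the disjoint representatives $\Delta_1,\Delta_2$ need not be the level-minimizing ones actually used to define $r'(E_1)$ and $r'(E_2)$; your argument is indifferent to which representatives realize the minimum. The bookkeeping you flag does go through: the base retraction $\mathcal{D}_{F_1}\rightarrow S^0$ is side-preserving, and each of the four clauses defining $r'$ visibly preserves sides, so the strengthened statement ``simplicial and side-preserving'' propagates from $r$ to $r'$.
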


\begin{proof}
To prove the claim is equivalent to show that $r'$ sends two disjoint disks of $\mathcal{D}_{F_{n+1}}$
to disjoint disks of $S^n$ or to the same disk.
There are several cases to consider according to the types of the two disks.

Case $1$. One disk is $E$ and the other disk $K$ disjoint from $E$ is of type $(2)$ or $(4)$.

Since a disk of type $(2)$ or $(4)$ is mapped into $S^{n-1}$,
$r'(K)$ is disjoint from $E$.

Case $2$. One disk $D_0$ is of type $(3)$ and
the other disk $K$ disjoint from $D_0$ is of type $(2)$ or $(4)$.

Since $r'(D_0)=D$ and $r'(K)\in S^{n-1}$, they are disjoint.

Case $3$. Both of two disjoint disks $D_1$ and $D_2$ are of type $(3)$.

Both $r'(D_1)$ and $r'(D_2)$ are $D$.

Case $4$. One disk $E_0$ is of type $(2)$ and
the other disk $D_0$ disjoint from $E_0$ is of type $(4)$.

For a disk of type $(2)$, the map $r'$ is defined using its outermost subdisk.
Since $E_0$ and $D_0$ are disjoint, so are $r'(E_0)$ and $r'(D_0)$.

Case $5$. Both of two disjoint disks $E_1$ and $E_2$ are of type $(2)$.

Let $\Delta_1$ and $\Delta_2$ be compressing disks for $F_n$ in $W$
obtained from outermost disks of $E_1$ and $E_2$ respectively.
In particular, $\Delta_1$ and $\Delta_2$ can be chosen to be disjoint.
See Figure \ref{fig4}.
Hence $r'(E_1)$ and $r'(E_2)$ are disjoint or isotopic.

\begin{figure}[ht!]
\begin{center}
\includegraphics[width=7.5cm]{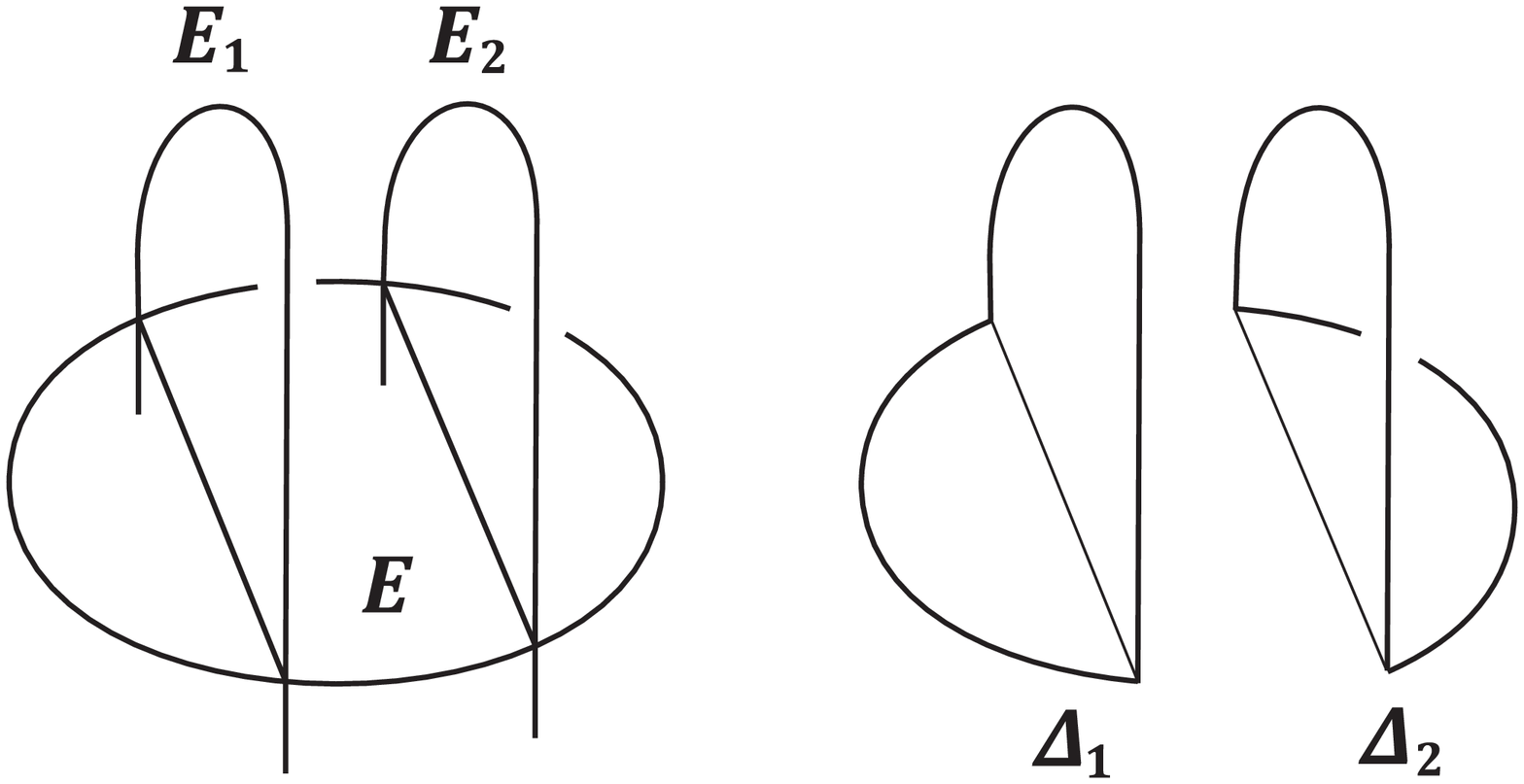}
\caption{}\label{fig4}
\end{center}
\end{figure}

Case $6$. Both of two disjoint disks $D_1$ and $D_2$ are of type $(4)$.

Since $D_1$ and $D_2$ are disjoint, $r'(D_1)$ and $r'(D_2)$ are disjoint or isotopic.
\end{proof}

Because higher dimensional simplices of $\mathcal{D}_{F_{n+1}}$
are determined by its $1$-skeleton,
the map $r'$ can be extended to all of $\mathcal{D}_{F_{n+1}}$.
It is obvious that $r'$ is identity on $S^n$.
Thus $r':\mathcal{D}_{F_{n+1}}\rightarrow S^n$ is a retraction.
So $\pi_n(\mathcal{D}_{F_{n+1}})$ is non-trivial and
$F_{n+1}$ is a topologically minimal surface of index at most $n+1$.

For $n\ge 2$, $F_n$ is not strongly irreducible,
so the topological index of $F_n$ is at least two.
It is interesting to see that $F_n$ is isotopic to a surface
obtained from $F$ or $F_1$ by stabilizations. See Figure $5$.

\begin{figure}[ht!]
\begin{center}
\includegraphics[width=9cm]{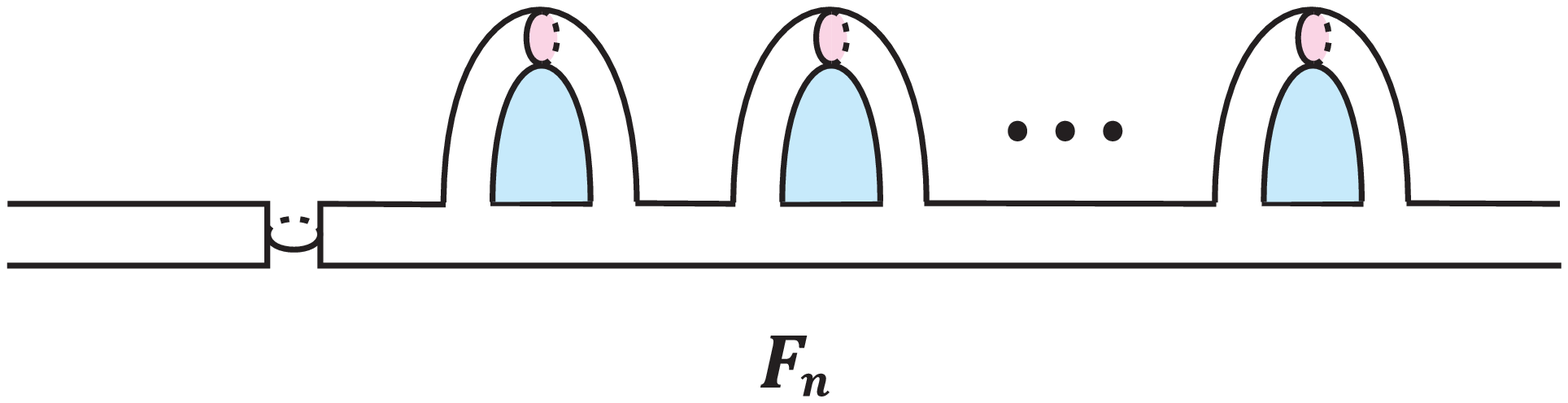}
\caption{}\label{fig5}
\end{center}
\end{figure}

\begin{question}
What is the topological index of $F_n$ for $n\ge 2$?
\end{question}


\end{document}